\documentclass{article}
\usepackage[latin1]{inputenc}
\usepackage[T1]{fontenc}
\usepackage{amsmath,amssymb}
\usepackage{amsfonts,amsthm}
\usepackage{geometry}
\usepackage[hypertexnames=false]{hyperref}
\usepackage{hypcap}
\usepackage{graphicx}
\usepackage{caption}
\usepackage{subcaption}

\newcommand{\C}{\mathbb{C}}
\newcommand{\D}{\mathbb{D}}
\newcommand{\N}{\mathbb{N}}
\newcommand{\Q}{\mathbb{Q}}
\newcommand{\Z}{\mathbb{Z}}

\newcommand{\Lc}{\mathcal{L}}

\newcommand{\from}{\colon}

\DeclareMathOperator{\clos}{clos}
\DeclareMathOperator{\spec}{sp}
\DeclareMathOperator{\diag}{diag}

\renewcommand{\phi}{\varphi}

\providecommand{\abs}[1]{\left\lvert#1\right\rvert}
\providecommand{\set}[1]{\left\{ #1\right\}}

\newtheorem{thm}{Theorem}
\newtheorem{lem}[thm]{Lemma}

\begin{document}

\title{The Eigenvalues of Tridiagonal Sign Matrices are Dense in the Spectra of Periodic Tridiagonal Sign Operators}
\author{Raffael Hagger\footnote{raffael.hagger@tuhh.de}}
\maketitle

\begin{abstract}
Chandler-Wilde, Chonchaiya and Lindner conjectured that the set of eigenvalues of finite tridiagonal sign matrices (i.e. plus and minus ones on the first sub- and superdiagonal, zeroes everywhere else) is dense in the set of spectra of periodic tridiagonal sign operators on the usual Hilbert space of square summable bi-infinite sequences. We give a simple proof of this conjecture. As a consequence we get that the set of eigenvalues of tridiagonal sign matrices is dense in the unit disk. In fact, a recent paper further improves this result, showing that this set of eigenvalues is dense in an even larger set.
\end{abstract}

\textit{2010 Mathematics Subject Classification:} Primary 15B35, 15A18; Secondary 47A10, 47B36

\textit{Keywords:} sign matrix, tridiagonal, periodic operator, eigenvalues, spectrum

\section{Introduction}

Let $n,m \in \N$. For $k,l \in \set{\pm 1}^n$ we define the corresponding (finite) tridiagonal matrix
\[A^{k,l}_{fin} := \begin{pmatrix} 0 & l_1 & & \\ k_1 & \ddots & \ddots & \\ & \ddots & \ddots & l_n \\ & & k_n & 0 \end{pmatrix} \in \C^{(n+1) \times (n+1)}.\]
Similarly we define the $m$-periodic tridiagonal operator
\[A^{k,l}_{per} := \begin{pmatrix} \ddots & \ddots & & & & & & \\ \ddots & 0 & l_m & & & & & \\ & k_m & 0 & l_1 & & & &\\ & & k_1 & \ddots & \ddots & & & \\ & & & \ddots & \ddots & l_m & & \\ & & & & k_m & 0 & l_1 & \\ & & & & & k_1 & 0 & \ddots \\ & & & & & & \ddots & \ddots \end{pmatrix} \in \Lc(\ell^2(\Z))\]
for $k,l \in \set{\pm 1}^m$. In the following we will compare the spectra, denoted by $\spec(\cdot)$, of the finite matrices with the spectra of the periodic operators as follows. In accordance with the notation in \cite{ChaChoLi} and \cite{ChaChoLi2}, we define the sets $\sigma_\infty$ and $\pi_\infty$:
\[\sigma_\infty := \bigcup\limits_{n \in \N} \bigcup\limits_{k,l \in \set{\pm 1}^n} \spec(A^{k,l}_{fin}), \qquad \pi_\infty := \bigcup\limits_{m \in \N} \bigcup\limits_{k,l \in \set{\pm 1}^m} \spec(A^{k,l}_{per}).\]
These sets are of interest because they are closely related to the spectrum of the Feinberg-Zee Random Hopping Matrix, denoted by $\Sigma$, see e.g. \cite{ChaChoLi}-\cite{ChaDa}, \cite{FeZee}, \cite{Ha}-\cite{HoOrZee}. It was shown in \cite{ChaChoLi2} that
\[\sigma_\infty \subset \pi_\infty \subset \Sigma\]
holds. Note that these inclusions have to be proper because $\sigma_\infty$ is a countable set, $\pi_\infty$ is an uncountable null set (w.r.t. $2$-dimensional Lebesgue measure) and $\Sigma$ contains the closed unit disk (see below). The authors of \cite{ChaChoLi2} conjectured that $\sigma_\infty$ is actually dense in $\Sigma$. We make a step in this direction by proving that $\sigma_\infty$ is at least dense in $\pi_\infty$.

Moreover, it was shown in \cite{ChaDa} that $\pi_\infty$ is dense in the unit disk $\D := \set{z \in \C: \abs{z} \leq 1}$, i.e. $\clos(\pi_\infty) \cap \D = \D$. This result was improved to a larger set in \cite{Ha2}. Our new result here implies that $\sigma_\infty$ is dense in this particular set as well. On the other hand, using the numerical range, one can show that $\Sigma$ is contained in the square with vertices $\set{2,2i,-2,-2i}$ (\cite{ChaChoLi2}, \cite{Ha}). In \cite{Ha} this upper bound was further improved using second order numerical ranges. Furthermore, $\pi_\infty$ has an infinite number of symmetries that also carry over to $\sigma_\infty$ (see \cite{ChaChoLi2} for rotations and reflections, \cite{ChaDa} for square roots and \cite{Ha2} for higher roots). Combining all these results mentioned above, we have a pretty accurate description of the set $\sigma_\infty$. However, many questions still remain open. For example the seemingly fractal boundary is yet unexplained (cf. Figure 1, borrowed from \cite{ChaChoLi2}). Some comments on this particular issue can be found in \cite{Ha2}.

\begin{figure}
\begin{center}
\begin{subfigure}{0.4\textwidth}
\includegraphics[width = \textwidth]{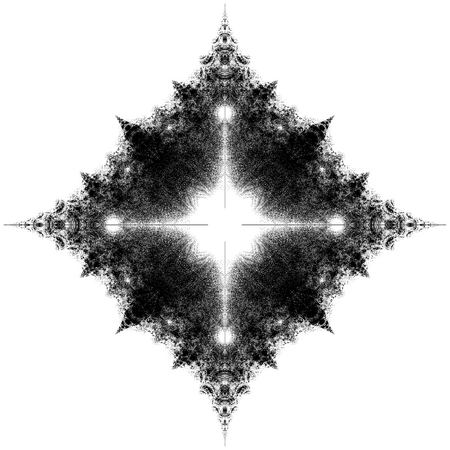}
\caption{\footnotesize eigenvalues of all $20 \times 20$ tridiagonal sign matrices}
\end{subfigure}
\qquad
\begin{subfigure}{0.4\textwidth}
\includegraphics[width = \textwidth]{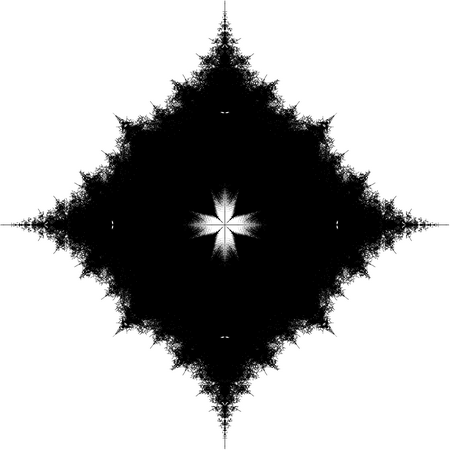}
\caption{\footnotesize spectra of all $20$-periodic tridiagonal sign operators}
\end{subfigure}

\caption{}
\end{center}
\end{figure}

\section{Three Lemmas and a Theorem}

As mentioned in the introduction it was shown in \cite{ChaChoLi2} that $\sigma_\infty$ is a subset of $\pi_\infty$. The following theorem confirms the conjecture by Chandler-Wilde, Chonchaiya and Lindner (\cite{ChaChoLi},\cite{ChaChoLi2}) that $\sigma_{\infty}$ is even dense in $\pi_{\infty}$.

\begin{thm} \label{theorem}
$\sigma_{\infty}$ is a dense subset of $\pi_{\infty}$.
\end{thm}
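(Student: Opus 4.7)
The plan is to reduce both spectral questions to a $2\times 2$ transfer-matrix problem and then exploit the fast oscillation of $\tan(N\phi)$ for large $N$. Throughout, set $T_j(\lambda) := \begin{pmatrix} \lambda/l_j & -k_{j-1}/l_j \\ 1 & 0 \end{pmatrix}$ (with $k_0 := k_m$) and $M(\lambda) := T_m(\lambda) \cdots T_1(\lambda)$, so that $\det M(\lambda) = \prod_j (k_{j-1}/l_j) \in \{\pm 1\}$.

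The first lemma would be the standard Floquet characterization: $\lambda \in \spec(A^{k,l}_{per})$ iff $M(\lambda)$ has an eigenvalue on the unit circle, equivalently iff $t(\lambda) := \tfrac{1}{2}\mathrm{tr}\, M(\lambda)$ lies in $[-1,1]$ when $\det M = 1$ or in $i[-1,1]$ when $\det M = -1$. For the second lemma I would consider the finite sign matrix of size $Nm+1$ whose two off-diagonals repeat the periodic pattern $(k,l)$ exactly $N$ times. Propagating the three-term recurrence from $(u_0, u_1) = (0,1)$ gives $(u_{Nm+1}, u_{Nm})^T = M(\lambda)^N (1,0)^T$, and the last row of the matrix supplies the eigenvalue condition
\[
\lambda \, (M(\lambda)^N)_{11} \;-\; k_m \, (M(\lambda)^N)_{21} \;=\; 0.
\]
By Cayley--Hamilton every entry of $M(\lambda)^N$ is expressible through the Chebyshev-type polynomials associated with the recurrence $p_{n+1} = 2t\,p_n \pm p_{n-1}$ (sign depending on $\det M$), so this is really a scalar equation in $\lambda$ with very simple $N$-dependence.

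For the third lemma and the main argument, fix $\lambda_0$ in the relative interior of a spectral arc of $A^{k,l}_{per}$ and parameterize the arc by the Floquet angle $\phi$ via $t(\lambda(\phi)) = \cos\phi$ in the $\det M = 1$ case (respectively $i\sin\phi$ when $\det M = -1$). On the arc the Chebyshev entries collapse to $\sin(N\phi)/\sin\phi$ and $\sin((N-1)\phi)/\sin\phi$, so a short manipulation rewrites the eigenvalue condition as
\[
\tan((N-1)\phi) \;=\; h(\phi),
\]
where $h$ is a continuous function on the arc built from $M_{11}, M_{21}$ and $\lambda$. Since $\tan((N-1)\phi)$ is surjective onto $\mathbb{R}$ on every interval of length $\pi/(N-1)$, an intermediate-value argument yields $\phi_N$ with $|\phi_N - \phi_0| = O(1/N)$, and $\lambda_N := \lambda(\phi_N) \in \sigma_\infty$ converges to $\lambda_0$. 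Since every element of $\pi_\infty$ lies on such an arc, the theorem follows.

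I expect the main obstacle to be bookkeeping rather than depth: keeping the Dirichlet boundary convention of the finite matrix consistent with the periodic monodromy product, treating both signs of $\det M$ in parallel, and verifying that the degenerate points of each arc (band-edge angles with $\sin\phi_0 = 0$, critical points of $t(\lambda)$, poles of $h$) form only a discrete set, which can be bypassed since we merely need a dense subset of $\pi_\infty$.
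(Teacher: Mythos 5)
Your Floquet lemma (spectrum of $A^{k,l}_{per}$ characterized by the half-trace $t(\lambda)$ of the monodromy matrix) and the Chebyshev-type expression for the entries of $M(\lambda)^N$ are both fine, and they are equivalent in content to the paper's Lemmas 1 and 2. The gap is in the decisive step: solving $\tan((N-1)\phi)=h(\phi)$ by an intermediate-value argument silently assumes that $h$ is \emph{real-valued} along the spectral arc. It is not. The arc is a curve in $\C$, the point $\lambda(\phi)$ is non-real at all but finitely many $\phi$, and $h$ is a rational expression in $\lambda$, $M_{11}(\lambda)$, $M_{21}(\lambda)$ evaluated at these non-real points (already $M_{21}(\lambda)=\lambda$ for the period-$2$ pattern $k=(1,-1)$). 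So along the arc you are imposing two real conditions (real and imaginary part of the eigenvalue equation) on a single real unknown $\phi$; surjectivity of $\tan$ on intervals of length $\pi/(N-1)$ then proves nothing, and no $\phi_N$ is produced. This is the classical band-filling argument for self-adjoint periodic Jacobi matrices transplanted to a genuinely non-self-adjoint setting, and the transplant is exactly where the whole difficulty of the theorem lies. That there is no hidden realness to rescue the step can be seen concretely: for $m=3$, $k=(1,1,-1)$, $l=(1,1,1)$ one has $\spec(A^{k,l}_{per})=\set{\lambda:\lambda^3-\lambda\in i[-2,2]}$, while your matrix of size $Nm+1=4$ (the pattern repeated once) has characteristic polynomial $\lambda^4-\lambda^2-1$, whose real roots $\pm\bigl((1+\sqrt5)/2\bigr)^{1/2}\approx\pm1.27$ give $\lambda^3-\lambda\approx\pm0.79\notin i[-2,2]$; Dirichlet truncations of a periodic sign pattern simply do not confine their eigenvalues to the arcs of that pattern, so the arc-restricted equation cannot be treated as a real one.

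To repair your route you would need either a proof of such a realness/symmetry (false in general, as above) or a genuinely complex-analytic localization -- e.g.\ writing the characteristic polynomial as $\alpha(\lambda)e^{iN\phi}-\beta(\lambda)e^{-iN\phi}$ up to controlled errors and applying Rouch\'e or the argument principle in small complex neighbourhoods of arc points, together with non-vanishing estimates on $\alpha,\beta$ -- which is considerably more than the sketch provides. The paper sidesteps the issue entirely: it imposes periodic rather than Dirichlet boundary conditions, so the resulting $nm\times nm$ block circulant is unitarily equivalent to $\diag\bigl(a^k(\xi_1),\ldots,a^k(\xi_n)\bigr)$ and its eigenvalues are \emph{exactly} prescribed points on the arcs; it then converts the circulant into a genuine tridiagonal sign matrix by deleting one row and column, which is legitimate because $\spec(a^k(\xi_j))=\spec(a^k(\xi_{n-j}))$ makes each relevant eigenvalue at least doubly degenerate, so an eigenvector vanishing at the deleted site exists. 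Some substitute for that multiplicity argument (or for the boundary-condition switch) is the missing idea in your proposal.
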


\setcounter{thm}{0}

First observe that it is enough to consider pairs $(k,l)$ with $l = (1, \ldots, 1)$ since every $A^{k,l}_{fin}$ is unitarily equivalent, via a diagonal so-called gauge transform (\cite{TrEm}), to some $A^{\tilde{k},\tilde{l}}_{fin}$ with $\tilde{k} \in \set{\pm 1}^n$ and $\tilde{l} = (1, \ldots, 1)$. Similarly, every $A^{k,l}_{per}$ is unitarily equivalent to some $A^{\tilde{k},\tilde{l}}_{per}$ with $\tilde{k} \in \set{\pm 1}^m$ and $\tilde{l} = (1, \ldots, 1)$. Thus we omit the index $l$ from now on.

In the proof of Theorem \ref{theorem} we will use three auxiliary lemmas. The first one is well-known in the theory of block Laurent operators. It generalizes the symbol calculus for Laurent operators to periodic operators. Just observe that an $m$-periodic operator can be interpreted as a block Laurent operator with block size $m \times m$. Using a block Fourier transform it is then not hard to see that the operator $A^{k}_{per}$ is unitarily equivalent to the (generalized) multiplication operator $M_{a^k} \from L^2([0,2\pi),\C^m) \to L^2([0,2\pi),\C^m)$ defined by the matrix valued function
\[a^k \from [0,2\pi) \to \C^{m \times m}, \quad \phi \mapsto a^k(\phi) := \begin{pmatrix} 0 & 1 & & k_me^{i\phi} \\ k_1 & \ddots & \ddots & \\ & \ddots & \ddots & 1 \\ e^{-i\phi} & & k_{m-1} & 0 \end{pmatrix}.\]
The first lemma is then an easy consequence. For a proof and more material on this subject see e.g. \cite[Theorem 2.93]{BoeSi}, \cite[Theorem 4.4.9]{Davies} or \cite[Chap. VIII, Theorem 5.1]{GoFe}.
\begin{lem} \label{lemma1}
Let $m \in \N$ and $k \in \set{\pm 1}^m$. Then we have 
\[\spec(A^k_{per}) = \set{\lambda \in \C : \det(a^k(\phi) - \lambda I_m) = 0 \text{ for some } \phi \in [0,2\pi)}.\]
\end{lem}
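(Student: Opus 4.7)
The plan is to reduce the computation of $\spec(A^k_{per})$ to the standard spectral theory of matrix-valued multiplication operators on $L^2$, using the block Fourier transform already alluded to in the paragraph preceding the lemma. Since $A^k_{per}$ is $m$-periodic, I would group the coordinates of $\ell^2(\Z)$ into blocks of length $m$ to obtain a unitary identification $\ell^2(\Z)\cong\ell^2(\Z,\C^m)$. Under this identification, $A^k_{per}$ becomes a (vector-valued) Laurent operator whose generating coefficients are three $m\times m$ matrices encoding the within-block action and the couplings to the neighboring blocks. Applying the usual $\ell^2(\Z,\C^m)\to L^2([0,2\pi),\C^m)$ Fourier transform then conjugates this Laurent operator into multiplication by a trigonometric matrix polynomial, and a direct verification shows that this symbol is exactly $a^k(\phi)$, with the corner entries $k_m e^{i\phi}$ and $e^{-i\phi}$ arising from the shift-by-$m$ coupling between adjacent blocks.

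Once this unitary equivalence is in place, it suffices to show that $\spec(M_{a^k})$ equals the set $\{\lambda\in\C:\det(a^k(\phi)-\lambda I_m)=0\text{ for some }\phi\}$. For the inclusion from right to left, suppose $\det(a^k(\phi_0)-\lambda I_m)=0$ and choose a unit vector $v\in\C^m$ with $a^k(\phi_0)v=\lambda v$. Taking shrinking neighborhoods $I_n\subset[0,2\pi)$ of $\phi_0$ and setting $v_n(\phi)=|I_n|^{-1/2}\chi_{I_n}(\phi)\,v$, continuity of $a^k$ gives $\|(M_{a^k}-\lambda I)v_n\|\to 0$ with $\|v_n\|=1$, so $\lambda\in\spec(M_{a^k})$. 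For the reverse inclusion, suppose $\det(a^k(\phi)-\lambda I_m)\neq 0$ for every $\phi\in[0,2\pi]$. By continuity of $a^k$ and compactness of the torus, $\phi\mapsto (a^k(\phi)-\lambda I_m)^{-1}$ is continuous and uniformly bounded; multiplication by it provides a bounded two-sided inverse of $M_{a^k}-\lambda I$, so $\lambda\notin\spec(M_{a^k})$.

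The only genuinely technical step is the first one, namely verifying that the block Fourier transform conjugates $A^k_{per}$ into $M_{a^k}$ with precisely the symbol written down in the excerpt. Everything after that is either a one-line approximate-eigenvector construction or an application of compactness, and the lemma is in any case attributed to standard references on block Laurent operators, so the proof is essentially a matter of pointing to the right machinery and doing the block bookkeeping carefully.
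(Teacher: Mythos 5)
Your proposal is correct and follows essentially the same route as the paper, which likewise reduces $A^k_{per}$ via the block Fourier transform to the multiplication operator $M_{a^k}$ and then invokes the standard spectral description of continuous matrix-valued multiplication operators (the paper defers this last step to the cited references, while you supply the approximate-eigenvector and compactness arguments explicitly).
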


In the case of tridiagonal periodic operators, only the constant term of $\det(a^k(\phi) - \lambda I_m)$ (as a polynomial of $\lambda$) depends on $\phi$ as the next lemma shows. This leads to the fact that the spectrum of every periodic operator $A^k_{per}$ can be written as $p^{-1}([-2,2])$ for some polynomial $p$ (cf. Figure 2). For an explicit formula of $p$ we refer to \cite{Ha2}.

\begin{figure}[ht!]
\begin{center}
\includegraphics[width = \textwidth, trim = 15mm 5mm 10mm 5mm, clip = true]{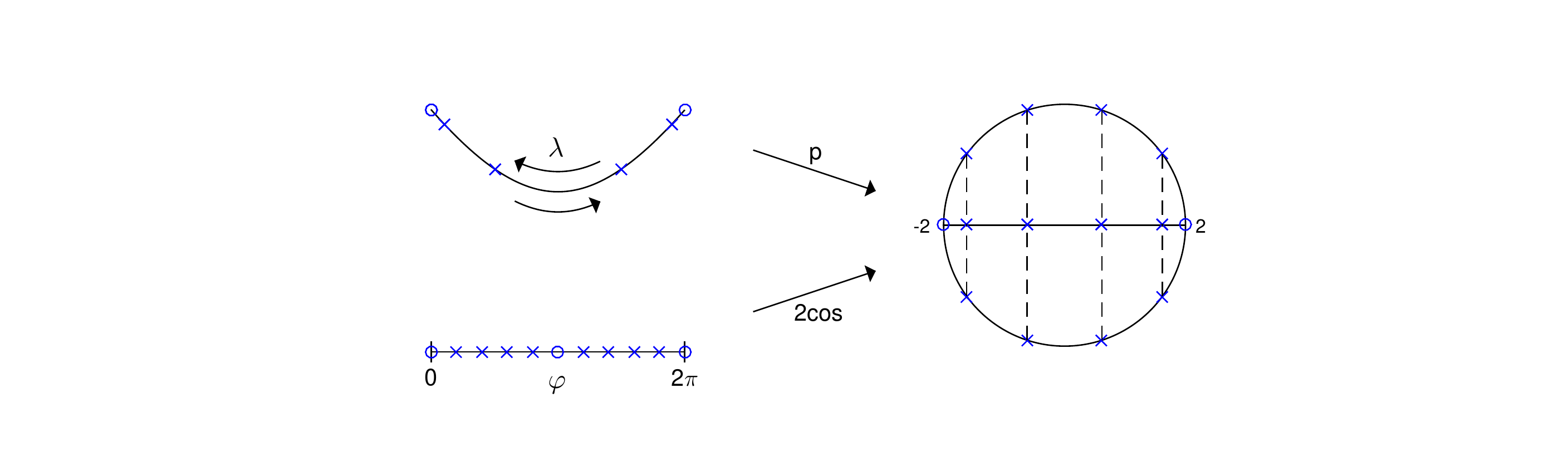}
\end{center}

\caption{a sketch of the maps involved in the computation of the spectrum of a periodic operator}
\end{figure}

\begin{lem} \label{lemma2}
Let $m \in \N$ and $k \in \set{\pm 1}^m$. Then there exists a polynomial $p \from \C \to \C$ of degree $m$ (depending on $k$) such that
\[\det(a^k(\phi) - \lambda I_m) = (-1)^m\left(p(\lambda) -e^{i\phi}\prod\limits_{j = 1}^m k_j - e^{-i\phi}\right)\]
for all $\phi \in [0,2\pi)$.
\end{lem}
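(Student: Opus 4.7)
My plan is to apply the Leibniz formula to $\det(a^k(\phi) - \lambda I_m)$ and track how each contributing permutation interacts with the two corner entries $(1,m) = k_m e^{i\phi}$ and $(m,1) = e^{-i\phi}$, which are the only places where $\phi$ appears. The nonzero entries of $a^k(\phi) - \lambda I_m$ sit on the diagonal (value $-\lambda$), the sub- and super-diagonals, and the two corners. A permutation $\sigma$ therefore contributes non-trivially iff for every $i$ either $\abs{\sigma(i)-i} \le 1$ or $(i,\sigma(i)) \in \set{(1,m),(m,1)}$. I would partition such $\sigma$ into four disjoint classes according to whether it uses neither, only $(1,m)$, only $(m,1)$, or both corner entries, and evaluate each class separately.

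For the first class, the sum of contributions is simply $D_0(\lambda) := \det T(\lambda)$, the determinant of the ordinary open tridiagonal matrix obtained by zeroing out the two corners; this is a polynomial of degree $m$ in $\lambda$ with leading term $(-\lambda)^m$, and is independent of $\phi$. For the ``both corners'' class, $\sigma(1) = m$ and $\sigma(m) = 1$ are forced, and $\sigma$ restricted to $\set{2,\ldots,m-1}$ is an adjacent-only permutation of that set; the two corner entries multiply to $k_m e^{i\phi} \cdot e^{-i\phi} = k_m$, so after incorporating the sign $-1$ from the transposition $(1\ m)$ the class contributes $-k_m q(\lambda)$, where $q(\lambda)$ is the determinant of the inner $(m-2)\times(m-2)$ tridiagonal block. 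For the class ``only $(1,m)$'', the conditions $\sigma(m) \ne 1$ (only this corner is used), $\sigma(m) \ne m$ (since $\sigma(1) = m$) and $\abs{\sigma(m)-m} \le 1$ together force $\sigma(m) = m-1$; iterating this reasoning forces $\sigma(i) = i-1$ for every $i \ge 2$, leaving the unique $m$-cycle $(1\ m\ m{-}1\ \cdots\ 2)$ of sign $(-1)^{m-1}$. Its entry product is $k_m e^{i\phi}\cdot k_{m-1}\cdots k_1 = e^{i\phi}\prod_{j=1}^m k_j$, so this class contributes $(-1)^{m-1} e^{i\phi}\prod_{j=1}^m k_j$. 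A mirror argument shows that ``only $(m,1)$'' contains the single $m$-cycle $(1\ 2\ \cdots\ m)$ and contributes $(-1)^{m-1} e^{-i\phi}$.

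Summing the four classes and using $(-1)^{m-1} = -(-1)^m$ yields
\[\det(a^k(\phi) - \lambda I_m) = (-1)^m\Bigl[(-1)^m\bigl(D_0(\lambda) - k_m q(\lambda)\bigr) - e^{i\phi}\prod_{j=1}^m k_j - e^{-i\phi}\Bigr],\]
so the polynomial $p(\lambda) := (-1)^m\bigl(D_0(\lambda) - k_m q(\lambda)\bigr)$ satisfies the claimed identity. A leading-term check gives $\deg p = m$ with leading coefficient $1$, because $D_0$ contributes $(-\lambda)^m$ while $k_m q$ has degree only $m-2$. The only delicate point in the argument is the forcing step isolating the two single-corner $m$-cycles; everything else is routine bookkeeping of signs and polynomial degrees.
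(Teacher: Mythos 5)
Your argument is correct and is essentially the paper's own proof: the paper likewise expands $\det(a^k(\phi)-\lambda I_m)$ by the Leibniz formula and observes that the only surviving term with $e^{i\phi}$ but not $e^{-i\phi}$ is $(-1)^{m+1}e^{i\phi}\prod_{j=1}^m k_j$ (and symmetrically for $e^{-i\phi}$), exactly the two single-corner $m$-cycles you isolate. Your four-class bookkeeping, including the cancellation $e^{i\phi}e^{-i\phi}=1$ in the both-corners class and the degree check on $p$, just makes explicit what the paper leaves as ``easily seen.''
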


\begin{proof}
Using the Leibniz formula
\[\det(a^k(\phi) - \lambda I_m) = \sum\limits_{\tau \in S_m} \text{sign}(\tau) \prod\limits_{i = 1}^n a_{i,\tau_i}^k(\phi),\]
it is easily seen that the only surviving term containing $e^{i\phi}$ but not $e^{-i\phi}$ is $(-1)^{m+1}e^{i\phi}\prod\limits_{j = 1}^m k_j$ and similar for $e^{-i\phi}$. Thus the assertion follows. Alternatively one could also apply Laplace's formula twice to get the same result (see \cite{Ha2} for details).
\end{proof}

The third lemma is a discrete version of Lemma \ref{lemma1}. Although the result is well-known (see e.g. \cite[Section 2.1]{BoeGr}), we include a short proof for the reader's convenience.

\begin{lem} \label{lemma3}
Let $n,m \in \N$ and $A,B,C \in \C^{m \times m}$. Furthermore, denote by $e^{i\xi_1}, \ldots, e^{i\xi_n}$ the $n$-th roots of unity, i.e. $\xi_j := \frac{2j}{n}\pi$ for $j \in \set{1, \ldots, n}$. Then the following block matrices are unitarily equivalent:
\begin{align*}
&\qquad \qquad \quad \; T_1 := \begin{pmatrix} B & C & & A \\ A & \ddots & \ddots & \\ & \ddots & \ddots & C \\ C & & A & B \end{pmatrix} \in \C^{nm \times nm},\\
T_2 &:= \begin{pmatrix} Ae^{i\xi_1} + B + Ce^{-i\xi_1} & & & \\ & \ddots & & \\ & & \ddots & \\ & & & Ae^{i\xi_n} + B + Ce^{-i\xi_n} \end{pmatrix} \in \C^{nm \times nm}\\
& \; = \diag(Ae^{i\xi_1} + B + Ce^{-i\xi_1}, \ldots, Ae^{i\xi_n} + B + Ce^{-i\xi_n}).
\end{align*}
\end{lem}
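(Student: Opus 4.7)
The plan is to recognize $T_1$ as a block circulant matrix and diagonalize it by a block discrete Fourier transform, in direct analogy with the unitary equivalence used before Lemma~\ref{lemma1}.

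First, I would rewrite $T_1$ as a sum of Kronecker products:
$$T_1 = I_n \otimes B + P \otimes A + P^* \otimes C,$$
where $P \in \C^{n \times n}$ is the cyclic shift matrix with $P_{i+1,i} = 1$ for $i \in \set{1,\ldots,n-1}$ and $P_{1,n} = 1$ (all other entries zero). A direct comparison of block positions confirms this: the $A$-blocks of $T_1$ sit on the block subdiagonal together with the $(1,n)$-block corner, which is exactly the nonzero pattern of $P$; the $C$-blocks sit on the block superdiagonal together with the $(n,1)$-block corner, matching $P^* = P^{-1}$; and the diagonal blocks $B$ correspond to $I_n$.

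Next, I would invoke the well-known fact that the cyclic shift $P$ is unitarily diagonalized by the $n \times n$ discrete Fourier matrix $F$, with eigenvalues exactly the $n$-th roots of unity $e^{i\xi_1},\ldots,e^{i\xi_n}$. That is, $F^* P F = D$ with $D := \diag(e^{i\xi_1},\ldots,e^{i\xi_n})$, and consequently $F^* P^* F = D^*$.

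Finally, I would conjugate $T_1$ by the unitary $U := F \otimes I_m$. Using the mixed-product property $(X \otimes Y)(X' \otimes Y') = (XX') \otimes (YY')$ and the fact that $F^* I_n F = I_n$, we obtain
$$U^* T_1 U = I_n \otimes B + D \otimes A + D^* \otimes C,$$
which is block diagonal with $j$-th diagonal block $B + Ae^{i\xi_j} + Ce^{-i\xi_j}$; this is exactly $T_2$. There is no substantial obstacle here---the only care needed is to pin down the convention for $P$ (shift up versus shift down) so that the corner blocks $T_{1,n} = A$ and $T_{n,1} = C$ end up paired with the correct Kronecker factor.
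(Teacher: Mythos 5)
Your proposal is correct and follows essentially the same route as the paper: the paper also writes $T_1 = P \otimes A + I_n \otimes B + P^* \otimes C$ with $P$ the cyclic shift, diagonalizes $P$ unitarily to $D = \diag(e^{i\xi_1}, \ldots, e^{i\xi_n})$, and concludes that $T_1$ is unitarily equivalent to $D \otimes A + I_n \otimes B + D^* \otimes C = T_2$. Your version merely makes the Fourier matrix $F$ and the mixed-product step explicit, which the paper leaves implicit.
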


\begin{proof}
With the help of the Kronecker product $\otimes$, we can write $T_1 = P \otimes A + I_n \otimes B + P^* \otimes C$, where
\[P := \begin{pmatrix} 0 & & & 1 \\ 1 & \ddots & & \\ & \ddots & \ddots & \\ & & 1 & 0 \end{pmatrix} \in \C^{n \times n}.\]
$P$ is unitarily equivalent to the diagonal matrix
\[D := \begin{pmatrix} e^{i\xi_1} & & \\ & \ddots & \\ & & e^{i\xi_n} \end{pmatrix} \in \C^{n \times n}.\]
Thus $T_1$ is unitarily equivalent to $D \otimes A + I_n \otimes B + D^* \otimes C$, which is exactly $T_2$.
\end{proof}

\begin{proof}
(Theorem)\\
That $\sigma_\infty \subset \pi_\infty$ holds was proven in \cite[Theorem 4.1]{ChaChoLi2}. So let $m \in \N$, $k \in \set{\pm 1}^m$ and consider the operator $A^k_{per}$. W.l.o.g. we can assume that the number of $-1$'s in $k$ is even because we can always double the period without changing the operator. This implies that we have
\begin{align} \label{eq1}
\det(a^k(\phi) - \lambda I_m) = (-1)^m(p(\lambda) - 2\cos(\phi))
\end{align}
in Lemma \ref{lemma2}. Using Lemma \ref{lemma1}, we get $\spec(A^k_{per}) = p^{-1}([-2,2])$. This implies, in particular, that the set
\[S^k_\infty := \set{\lambda \in \C : \det(a^k(\phi) - \lambda I_m) = 0 \text{ for some } \phi \in \pi(\Q \setminus \Z)} = p^{-1}(2\cos(\pi(\Q \setminus \Z)))\]
is dense in $\spec(A^k_{per})$, cf. again Figure 2. Let $n \in \N$ and $\xi_j := \frac{2j}{n}\pi$ for $j \in \set{1, \ldots, n}$ as above. We will show that for every $n \in \N$ there exists a finite matrix $A^l_{fin} \in \C^{(nm-1) \times (nm-1)}$ such that
\begin{equation} \label{eq2}
S^k_n := \bigcup\limits_{j \in \set{1, \ldots, n-1} \setminus \{\frac{n}{2}\}} \spec(a^k(\xi_j)) \subset \spec(A^l_{fin}) \subset \sigma_\infty.
\end{equation}
Since
\[S^k_\infty = \bigcup\limits_{n \in \N} S^k_n\]
holds, this implies
\[S^k_\infty = \bigcup\limits_{n \in \N}\bigcup\limits_{j \in \set{1, \ldots, n-1} \setminus \{\frac{n}{2}\}} \spec(a^k(\xi_j)) \subset \sigma_\infty.\]
Because $k \in \set{\pm 1}^m$ and $m \in \N$ were arbitrary, this implies that $\sigma_\infty$ contains a dense subset of $\pi_\infty$. Hence the assertion follows.

In order to construct this matrix $A^l_{fin}$, we arrange the matrices $a^k(\xi_j)$ in a block diagonal matrix and use Lemma \ref{lemma3}:
\[\begin{pmatrix} a^k(\xi_1) & & & \\ & a^k(\xi_2) & & \\ & & \ddots & \\ & & & a^k(\xi_n) \end{pmatrix} \cong \begin{pmatrix} B & C & & A \\ A & \ddots & \ddots & \\ & \ddots & \ddots & C \\ C & & A & B \end{pmatrix} =: M \in \C^{nm \times nm},\]
where
\[A = \begin{pmatrix} & & & k_m \\ & & & \\ & & & \\ & & & \end{pmatrix}, B = \begin{pmatrix} 0 & 1 & & \\ k_1 & \ddots & \ddots & \\ & \ddots & \ddots & 1 \\ & & k_{m-1} & 0 \end{pmatrix}, C = \begin{pmatrix} & & & \\ & & & \\ & & & \\ 1 & & & \end{pmatrix}.\]
Now note that by \eqref{eq1}, $a^k(\xi_j)$ and $a^k(\xi_{n-j})$ have the same spectrum for all $j \in \set{1, \ldots, n-1}$. This implies that every eigenspace of $M$ corresponding to some $\lambda \in S^k_n$ is at least two-dimensional. So we can choose two linearly independent eigenvectors $v,w \in \C^{nm}$ of $M$ corresponding to $\lambda$ and a non-trivial linear combination $x := \alpha v + \beta w \in \C^{nm}$, $\alpha,\beta \in \C$ such that $x_1 = 0$. Of course, $x$ is still an eigenvector of $M$ corresponding to $\lambda$. Let $A^l_{fin} := (M_{ij})_{2 \leq i,j, \leq nm}$. Then we have
\[
\left(\begin{array}{cc}
0 &
\begin{array}{ccccc}
1 & & & & k_m
\end{array} \\
\cline{2-2}
\begin{array}{c} k_1 \\ \\ \\ \\ 1 \end{array} &
\multicolumn{1}{|c|}{A^l_{fin}} \\
\cline{2-2}
\end{array}
\right)
\left(\begin{array}{c}
0 \\ \hline
\multicolumn{1}{|c|}{x_2}\\
\multicolumn{1}{|c|}{}\\
\multicolumn{1}{|c|}{\vdots}\\
\multicolumn{1}{|c|}{}\\
\multicolumn{1}{|c|}{x_{nm}}\\
\hline
\end{array}
\right)
=
\lambda
\left(\begin{array}{c}
0 \\ \hline
\multicolumn{1}{|c|}{x_2}\\
\multicolumn{1}{|c|}{}\\
\multicolumn{1}{|c|}{\vdots}\\
\multicolumn{1}{|c|}{}\\
\multicolumn{1}{|c|}{x_{nm}}\\
\hline
\end{array}
\right).
\]
This implies that $\lambda$ is also an eigenvalue of $A^l_{fin}$, hence \eqref{eq2} follows.
\end{proof}

Our result heavily depends on the fact that all elements on the first sub- and superdiagonal have the same absolute value (and on tridiagonality of course). These cases are usually called critical or degenerated. Here degenerated refers to the fact that if all elements on the first sub- and superdiagonal have the same absolute value, the spectra of the periodic operators, which are loops in general, are degenerated to an arc. Changing the alphabet in the first subdiagonal to $\set{\pm \sigma}$, where $\sigma \in (0,1)$, as considered in \cite{ChaDa} or \cite{Ha} for example, the conclusion fails. This was of course to be expected because a finite matrix
\[\begin{pmatrix} 0 & l_1 & & \\ k_1 & \ddots & \ddots & \\ & \ddots & \ddots & l_n \\ & & k_n & 0 \end{pmatrix} \in \C^{(n+1) \times (n+1)}, k \in \set{\pm \sigma}^n, l \in \set{\pm 1}^n\]
is always similar (diagonal gauge transform, see \cite{TrEm}) to a matrix
\[\begin{pmatrix} 0 & \tilde{l}_1 & & \\ \tilde{k}_1 & \ddots & \ddots & \\ & \ddots & \ddots & \tilde{l}_n \\ & & \tilde{k}_n & 0 \end{pmatrix} \in \C^{(n+1) \times (n+1)}, \tilde{k} \in \set{\pm \sqrt{\sigma}}^n, \tilde{l} \in \set{\pm \sqrt{\sigma}}^n.\]
This is no longer true for periodic operators. For tridiagonal operators on $\ell^2(\Z)$ we can only shift phases to the other side. This remaining freedom, however, can be used to prove Theorem \ref{theorem} for arbitrary alphabets as long as all elements share the same absolute value. This only needs a small modification of Lemma \ref{lemma3} and a refinement of \cite[Theorem 4.1]{ChaChoLi2}. The details are left to the reader.

\bigskip

\noindent
{\bf Author's address:}

\medskip

\noindent
Raffael Hagger \hfill{\tt raffael.hagger@tuhh.de}\\
Institute of Mathematics\\
Hamburg University of Technology\\
Schwarzenbergstr. 95 E\\
D-21073 Hamburg\\
GERMANY


\begin{thebibliography}{}
  \bibitem{BoeGr} A. B\"ottcher and S.M. Grudsky: \emph{Spectral Properties of Banded Toeplitz Matrices}, SIAM, Philadelphia, 2005.
  \bibitem{BoeSi} A. B\"ottcher and B. Silbermann: \emph{Analysis of Toeplitz Operators}, Springer Verlag, Berlin, Heidelberg, New York, 1990.
  \bibitem{ChaChoLi} S.N. Chandler-Wilde, R. Chonchaiya and M. Lindner: \emph{Eigenvalue Problem meets Sierpinski Triangle: Computing the Spectrum of a Non-Self-Adjoint Random Operator}, Operators and Matrices, 5 (2011), 633-648.
  \bibitem{ChaChoLi2} S.N. Chandler-Wilde, R. Chonchaiya and M. Lindner: \emph{On the Spectra and Pseudospectra of a Class of non-self-adjoint Random Matrices and Operators}, Operators and Matrices, 7 (2013), 739-775.
  \bibitem{ChaDa} S.N. Chandler-Wilde and E.B. Davies: \emph{Spectrum of a Feinberg-Zee Random Hopping Matrix}, Journal of Spectral Theory, 2 (2012), 147-179.
	\bibitem{Davies} E.B. Davies: \emph{Linear Operators and their Spectra}, Cambridge Studies in Advanced Mathematics 106, Cambridge University Press, Cambridge, 2007.
	\bibitem{FeZee} J. Feinberg and A. Zee: \emph{Spectral Curves of Non-Hermitean Hamiltonians}, Nucl. Phys. B 552 (1999), 599-623.
	\bibitem{GoFe} I. Gohberg and I.A. Feldman: \emph{Convolution Equations and Projection Methods for Their Solution}, Amer. Math. Soc. Transl. of Math. Monographs 41, Providence, R.I., 1974.
	\bibitem{Ha} R. Hagger: \emph{On the Spectrum and Numerical Range of Tridiagonal Random Operators}, Preprint at arXiv: 1407.5486.
	\bibitem{Ha2} R. Hagger: \emph{Symmetries of the Feinberg-Zee Random Hopping Matrix}, Preprint at arXiv: 1412.1937.
	\bibitem{HoOrZee} D.E. Holz, H. Orland and A. Zee: \emph{On the Remarkable Spectrum of a Non-Hermitian Random Matrix Model}, J. Phys. A Math. Gen 36 (2003), 3385-3400.
	\bibitem{TrEm} L.N. Trefethen and M. Embree, \emph{Spectra and Pseudospectra: The Behavior of Nonnormal Matrices and Operators}, Princeton University Press, Princeton and Oxford, 2005.
\end{thebibliography}
\end{document}